\newtheorem{theorem}{Theorem}[section]
\newtheorem{lemma}[theorem]{Lemma}
\newtheorem{corollary}[theorem]{Corollary}
\newtheorem{proposition}[theorem]{Proposition}
\theoremstyle{definition}
\theoremstyle{remark}
\numberwithin{equation}{section}
\newcommand\Me{\mathcal{M}^\mathrm{ext}}
\newcommand\Md{\mathcal{M}^\mathrm{del}}
\newcommand\Meg{\mathcal{M}^\mathrm{extg}}
\newcommand\Ga{\mathcal{G}^\mathrm{epex}}
\newcommand\ba{\backslash}
\begin{document}

\title[Hereditary classes of matroids]{Extensions and Deletions of matroid classes closed under flats}

\author{Jagdeep Singh}
\address{Department of Mathematics and Statistics\\
Mississippi State University\\
Mississippi}
\email{singhjagdeep070@gmail.com}

\author{Vaidy Sivaraman}
\address{Department of Mathematics and Statistics\\
Mississippi State University\\
Mississippi}
\email{vaidysivaraman@gmail.com}

\keywords{Hereditary classes, Induced subgraphs, Forbidden flats.}

\subjclass{05B35, 05C75}
\date{\today}

\begin{abstract}
We call a class of matroids hereditary if it is closed under restriction to flats. For a hereditary class $\mathcal{M}$, its extension class consists of all matroids in $\mathcal{M}$ together with their single-element extensions. The deletion class consists of all matroids in $\mathcal{M}$ along with their single-element deletions.

We prove that if $\mathcal{M}$ has finitely many forbidden flats, then the forbidden flats for its extension class have bounded rank. For $GF(q)$-representable matroids where $q$ is in $\{2,3\}$, we exploit correspondence with $2$-colorings of projective geometries to establish the analogous result for the deletion class. We also note the consequences for hereditary classes of graphs, discussing the interplay of graphs and matroids.
\end{abstract}

\maketitle

\section{Introduction}
\label{intro}
We consider simple matroids and simple graphs. The notation and terminology follow~\cite{ox2}. An {\bf induced subgraph} of a graph $G$ is a graph $H$ obtained from $G$ by a sequence of vertex deletions. A class $\mathcal{G}$ of graphs is called {\bf hereditary} if it is closed under taking induced subgraphs. For a hereditary class $\mathcal{G}$, we call a graph $H$ a {\bf forbidden induced subgraph} for $\mathcal{G}$ if $H$ is not in $\mathcal{G}$ but every proper induced subgraph of $H$ is in $\mathcal{G}$. 

A class $\mathcal{M}$ of matroids is called {\bf hereditary} if it is closed under restriction to flats. For a hereditary class $\mathcal{M}$ of matroids, we call a matroid $N$ a {\bf forbidden flat} for $\mathcal{M}$ if $N$ is not in $\mathcal{M}$, but for every proper flat $F$ of $N$, the restriction $N|F$ is in $\mathcal{M}$. We say that a matroid $M$ {\it contains} $N$ as a flat when $M$ has a flat $F$ such that the restriction $M|F$ is isomorphic to $N$. Note that a matroid $M$ is in $\mathcal{M}$ if and only if $M$ contains no forbidden flat for $\mathcal{M}$. 

Hereditary classes of matroids arise naturally from hereditary classes of graphs. For a hereditary class $\mathcal{G}$ of graphs, consider the class $\mathcal{M}$ of cycle matroids of the graphs in $\mathcal{G}$. For some hereditary classes $\mathcal{G}$ such as the class of cographs (graphs not containing the four-vertex path as an induced subgraph), the class $\mathcal{M}$ is hereditary. In Section~$4$, we show that if a hereditary class $\mathcal{G}$ of graphs is closed under disjoint union, then the corresponding matroid class $\mathcal{M}$ is hereditary.  

While every minor-closed class of matroids is hereditary, we focus on hereditary classes defined by finitely many forbidden flats. A key example is the class of targets. Let $M$ be a rank-$r$ projective geometry represented over $GF(q)$, the field with $q$ elements. We call $(F_0, F_1, \ldots, F_k)$ \textit {a nested sequence of projective flats} if $\emptyset = F_0 \subseteq F_1 \subseteq \ldots \subseteq F_{k-1} \subseteq F_k = E(M)$ and each $F_i$ is a, possibly empty, flat of $M$. For a subset $X$ of $E(PG(r-1,q))$, we call $PG(r-1,q)|X$ a \textbf{target}, if there is a nested sequence $(F_0, F_1, \ldots , F_k)$ of projective flats such that $X$ is the union of all sets $F_{i+1} - F_i$ for $i$ even.

Note that the class of $GF(q)$-targets is hereditary. The following theorems characterize targets via a finite set of forbidden flats. 

\begin{theorem}[\cite{nelson}]
\label{nelson_nomoto}
Let $(G,R)$ be a $2$-coloring of $PG(r-1,2)$.  Then $PG(r-1,2)|G$ is a target if and only if it does not contain $U_{3,3}$ or $U_{2,3} \oplus U_{1,1}$ as a flat.
\end{theorem}

\begin{theorem}[\cite{ox1}]
\label{mizzel}
For a prime power $q$ exceeding two, let $(G,R)$ be a $2$-coloring of $PG(r-1,q)$.  Then $PG(r-1,q)|G$ is a target if and only if it does not contain any of $U_{2,2}, U_{2,3}, \ldots, U_{2,q-2},$ or $U_{2,q-1}$ as a flat.
\end{theorem}

Our main contribution concerns the stability of such characterizations under the operations of matroid extension and deletion. The {\bf edge-apex class, $\Ga$,} for a hereditary class $\mathcal{G}$ of graphs is the class of graphs $G$ such that $G$ is in $\mathcal{G}$, or $G$ has an edge $e$ such that $G-e$ is in $\mathcal{G}$.
Similarly, the {\bf extension class, $\Me$,} of a hereditary class $\mathcal{M}$ of matroids is the class of matroids $M$ such that $M$ is in $\mathcal{M}$, or $M$ contains an element $e$ such that $M \ba e$ is in $\mathcal{M}$. 

In \cite{sinsiv}, we showed that if a hereditary class $\mathcal{G}$ of graphs has finitely many forbidden induced subgraphs, then so does $\Ga$. In particular, we proved the following.

\begin{theorem}
\label{unique_FIS}
Let $\mathcal{G}$ be a hereditary class of graphs and let $c$ and $k$ denote the maximum number of vertices and edges in a forbidden induced subgraph for $\mathcal{G}$. If $G$ is a forbidden induced subgraph for $\Ga$, then $|V(G)| \leq$  $\max \{2c, c + k(c-2)\}$. 
\end{theorem}

Here we prove its matroid analogue. In particular, we show that for a hereditary class $\mathcal{M}$ of matroids, its extension class $\Me$ is hereditary, and if $\mathcal{M}$ has a finite number of forbidden flats, then the rank of forbidden flats for $\Me$ is bounded. Following is the precise statement. 

\begin{theorem}
\label{bound_k}
Let $\mathcal{M}$ be a hereditary class of matroids and let $r$ denote the maximum rank of a forbidden flat for  $\mathcal{M}$, and $k$ denote the maximum number of elements in a forbidden flat for  $\mathcal{M}$. Then $\Me$ is hereditary. Furthermore, every forbidden flat for $\Me$ has rank at most $\max \{2r, r + k(r-1)\}$.
\end{theorem}

The \textbf{deletion class}, $\Md$, of a hereditary class $\mathcal{M}$ of matroids is the class of matroids $M$ such that $M$ is in $\mathcal{M}$, or $M \cong N \ba e$ for a matroid $N$ in $\mathcal{M}$ and an element $e$ of $N$. We exploit the complementary relationship between extension and deletion in $GF(q)$-representable matroids for $q$ in $\{2, 3\}$, and prove an analogous result for $\Md$ in Section~$3$.

In Section $4$, we show that the relationship between graph and matroid hereditary classes allows us to derive a version of Theorem~\ref{unique_FIS} as a direct consequence of our matroid results.

\section{Exclusions for the extension class}

\begin{lemma}
\label{hereditary}
$\Me$ is hereditary.
\end{lemma}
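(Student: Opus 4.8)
The plan is to take an arbitrary $N \in \Me$ together with a flat $G$ of $N$ and show directly that $N|G \in \Me$. The first move is to split on the reason $N$ belongs to $\Me$. If $N \in \mathcal{M}$ there is nothing to do, since $\mathcal{M}$ is itself closed under flats, so $N|G \in \mathcal{M} \subseteq \Me$. Thus the only real content is the remaining case, where $N \notin \mathcal{M}$ but there is an element $e$ with $N \ba e \in \mathcal{M}$. In that situation I want to produce a witness that $N|G$ lies in $\Me$, namely either $N|G \in \mathcal{M}$ outright or an element of $N|G$ whose deletion lands in $\mathcal{M}$.

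The key step I would isolate as a short observation is how flats behave under a single deletion: if $G$ is a flat of $N$ and $e$ is any element of $N$, then $G \setminus \{e\}$ is a flat of $N \ba e$. This follows from the standard identity $\mathrm{cl}_{N \ba e}(X) = \mathrm{cl}_N(X) \setminus \{e\}$ combined with monotonicity of closure, giving $\mathrm{cl}_{N \ba e}(G \setminus \{e\}) \subseteq \mathrm{cl}_N(G) \setminus \{e\} = G \setminus \{e\}$, while the reverse inclusion holds because any set is contained in its own closure. The point worth stressing is that this single statement covers both $e \in G$ and $e \notin G$ uniformly, so I would not need to treat those as genuinely separate arguments at this stage.

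With that observation the conclusion falls out. Since $N \ba e \in \mathcal{M}$ and $G \setminus \{e\}$ is a flat of $N \ba e$, heredity of $\mathcal{M}$ yields $(N \ba e)|(G \setminus \{e\}) \in \mathcal{M}$, and this restriction equals $N|(G \setminus \{e\})$ because we are restricting to a set that avoids $e$. If $e \notin G$ then $N|(G \setminus \{e\}) = N|G$, so $N|G \in \mathcal{M} \subseteq \Me$; if $e \in G$ then $N|(G \setminus \{e\}) = (N|G) \ba e$, which exhibits $e$ as an element of $N|G$ whose deletion lies in $\mathcal{M}$, so again $N|G \in \Me$. I expect the only care needed is in the closure computation for the flat-deletion observation and in the restriction-versus-deletion bookkeeping, confirming that restricting to $G \setminus \{e\}$ is insensitive to whether $e$ was deleted first and that $N|(G \setminus \{e\})$ is genuinely $(N|G)\ba e$ when $e \in G$. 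Beyond these routine verifications there is no structural obstacle, and no induction or hypothesis on $\mathcal{M}$ other than closure under flats is required.
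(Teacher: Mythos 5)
Your proof is correct and follows essentially the same route as the paper: split on whether $N\in\mathcal{M}$, and otherwise use the witness $e$ with $N\ba e\in\mathcal{M}$ together with the fact that $G\setminus\{e\}$ is a flat of $N\ba e$ to conclude via the two subcases $e\notin G$ and $e\in G$. The only cosmetic difference is that you isolate and verify the flat-deletion observation explicitly before splitting cases, whereas the paper splits first and leaves that verification implicit.
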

\begin{proof}
Let $M$ be a matroid in $\Me$ and let $F$ be a flat of $M$. If $M$ is in $\mathcal{M}$, then the restriction $M|F$ is in $\mathcal{M}$, and so $M|F$ is in $\Me$. Otherwise, $M$ has an element $e$ such that $M \ba e$ is in $\mathcal{M}$. If $e$ is not in $F$, then $F$ is a flat of $M \ba e$ so  $M|F$ is in $\Me$. Thus $e$ is in $F$.  It follows that $(M|F)\ba e$ is in $\mathcal{M}$ so $M|F$ is in $\Me$.
\end{proof}

We prove a slightly stronger technical version of Theorem \ref{bound_k}, which provides a sharper bound on the rank of forbidden flats, and implies Theorem \ref{bound_k}. 

\begin{theorem}
\label{bound_k_general}
Let $\mathcal{M}$ be a hereditary class of matroids, and let $\mathcal{F}$ be the set of its forbidden flats. Let $r$ denote the maximum rank of a flat in $\mathcal{F}$. If $M$ is a forbidden flat for $\Me$, then the rank of $M$ is at most $\max\limits_{F \in \mathcal{F}}{\{2r, r(F) + |F|(r-1)\}}$. 
\end{theorem}

\begin{proof}
Note that $M$ is not in $\mathcal{M}$ so $E(M)$ has a flat $F_0$ such that the restriction $M|F_0$ is a forbidden flat for $\mathcal{M}$. Suppose that $F_0 = \{e_1, \ldots, e_k\}$ and $r(F_0) = s$. Since for each $i$ in $\{1, \ldots, k\}$, the matroid $M \ba e_i$ is not in $\mathcal{M}$, we have a subset $F_i$ of $E(M)-e_i$ such that the restriction $(M \ba e_i)|F_i$ is a forbidden flat for $\mathcal{M}$. Observe that, if $\bigcup_{i=0}^{k} F_i$ is contained in a hyperplane $H$ of $M$, then the matroid $M|H$ is not in $\Me$, a contradiction to the minimality of $M$. Therefore, $r(M) = r(\bigcup_{i=0}^{k} F_i)$.

First suppose that the intersection of a pair in $\{F_0, F_1, \ldots, F_k\}$, say $F_m$ and $F_n$, is empty. Then $r(M)=r(F_m \cup F_n)$. If not, then $F_m \cup F_n$ is contained in a hyperplane $H$ of $M$, and $M|H$ is not in $\Me$, a contradiction. Since $r(F_m) \leq r$ and $r(F_n) \leq r$, it follows that $r(M) = r(F_m\cup F_n) \leq 2r$. 

Therefore we may assume that each such pair has a non-empty intersection. It now follows from the submodularity of the rank function that $r(\bigcup_{i=0}^{k} F_i) \leq s + k(r-1)$. Since $s = r(F_0)$ and $k = |F_0|$ for the forbidden flat $F_0 \in \mathcal{F}$, we conclude that $r(M) \leq \max\limits_{F \in \mathcal{F}} \{2r, r(F) + |F|(r-1)\}$. This completes the proof.
\end{proof}

\section{Deletion class}

Recall that the deletion class of a hereditary class $\mathcal{M}$ of matroids is the class of matroids $M$ such that $M$ is in $\mathcal{M}$, or $M$ is a single-element deletion of a matroid in $\mathcal{M}$. It is clear that $\Md$ is hereditary. In this section, we show that if $\mathcal{M}$ is a hereditary class of $GF(q)$-representable matroids with finitely many forbidden flats, where $q$ is in $\{2, 3\}$, then the same holds for $\Md$. 

All matroids considered in this section are $GF(q)$-representable matroids, where $q$ is in $\{2, 3\}$. We specifically restrict our attention to binary and ternary matroids to use their property of unique representability. A \textbf{$2$-coloring} of a projective geometry $PG(r-1,q)$ is a partition $(G,R)$ of its points. The elements of $G$ are called the \textit{green points} of $PG(r-1,q)$, and the elements of $R$ the \textit{red points}. Note that the restriction $PG(r-1,q)|G$ of the projective geometry to the green points is a representation of a matroid $M$ over the field $GF(q)$. For a $GF(q)$-representable matroid $M$, we consider all $2$-colorings $(G,R)$ of projective geometries of rank $r \ge r(M)$ such that $PG(r-1,q)|G \cong M$. These are the $2$-colorings \textit{representing} $M$. The \textit{rank} of a $2$-coloring $C=(G,R)$ of $PG(r-1,q)$ is the rank of the underlying projective geometry $PG(r-1,q)$. Note that this is the rank of the ambient space and is not necessarily the minimal rank into which the matroid embeds.

We call the flats of $PG(r-1,q)$ \textit{projective flats}. A class $\mathcal{C}$ of $2$-colorings of projective geometries over $GF(q)$ is \textbf{hereditary} if the restriction of a coloring in $\mathcal{C}$ to any projective flat is again in $\mathcal{C}$. A coloring $C$ not in $\mathcal{C}$ whose restriction to every proper projective flat is in $\mathcal{C}$ is a \textbf{forbidden $2$-coloring} for a hereditary class $\mathcal{C}$.
 For a class $\mathcal{N}$ of $GF(q)$-representable matroids, let $\mathcal{N}_c$ denote the collection of all $2$-colorings of projective geometries over $GF(q)$ representing matroids in $\mathcal{N}$. We call $\mathcal{N}_c$ the $2$-coloring class corresponding to $\mathcal{N}$. Observe that if $\mathcal{N}$ is hereditary, then so is $\mathcal{N}_c$.

\begin{lemma}
\label{forbidden_colorings_flats}
For $q$ in $\{2, 3\}$, let $\mathcal{M}$ be a hereditary class of $GF(q)$-representable matroids, and let $\mathcal{M}_c$ be its corresponding $2$-coloring class. Then a matroid $M$ is a forbidden flat for $\mathcal{M}$ if and only if every $2$-coloring of $PG(r(M)-1,q)$ representing $M$ is a forbidden $2$-coloring for $\mathcal{M}_c$. Moreover, no $2$-coloring of rank greater than $r(M)$ representing $M$ is forbidden for $\mathcal{M}_c$. 
\end{lemma}

\begin{proof}
Let $C = (G,R)$ be a $2$-coloring of $PG(r(M)-1,q)$ representing a matroid $M$. If $M$ is a forbidden flat for $\mathcal{M}$, then $M$ is not in $\mathcal{M}$ so $C$ is not in $\mathcal{M}_c$. However, the restriction of $C$ to any proper projective flat is in $\mathcal{M}_c$ since it corresponds to a proper flat of $M$, which is in $\mathcal{M}$. It follows that $C$ is a forbidden $2$-coloring for $\mathcal{M}_c$. Conversely, suppose that $C$ is a forbidden $2$-coloring for $\mathcal{M}_c$. Then $C$ is not in $\mathcal{M}_c$ so $M$ is not in $\mathcal{M}$. Since every proper flat of $M$ corresponds to a restriction of $C$, which is in $\mathcal{M}_c$, it follows that $M$ is a forbidden flat for $\mathcal{M}$.

Finally, suppose that $C'$ is a coloring of rank greater than $r(M)$ that represents $M$. Then there is a restriction of $C'$ to a proper projective flat that represents $M$ and so is not in $\mathcal{M}_c$. Therefore $C'$ is not a forbidden $2$-coloring for $\mathcal{M}_c$. 
\end{proof}

For a coloring $C=(G,R)$ of $PG(r-1,q)$, its \textit{complement}, $\mathrm{comp}(C) := (R,G)$, is obtained by swapping the red and green points. For $q$ in $\{2, 3\}$ and a class $\mathcal{C}$ of $2$-colorings of projective geometries over $GF(q)$, its complement is
\[
\mathrm{comp}(\mathcal{C}) := \{\mathrm{comp}(C) \mid C \in \mathcal{C}\}.
\]

We omit the straightforward proof of the next result.

\begin{lemma}
\label{complement_class}
Let $\mathcal{C}$ be a class of $2$-colorings of projective geometries over $GF(q)$, where $q$ is in  $\{2, 3\}$. 
If $\mathcal{C}$ is hereditary, then its complementary class $\mathrm{comp}(\mathcal{C})$ is also hereditary. Moreover, a $2$-coloring $C$ is forbidden for $\mathcal{C}$ if and only if $\mathrm{comp}(C)$ is forbidden for $\mathrm{comp}(\mathcal{C})$.
\end{lemma}

For $q$ in $\{2, 3\}$ and a class $\mathcal{C}$ of 2-colorings of projective geometries over $GF(q)$, its \textit{extension class}, $\mathcal{C}^{\rm extq}$, consists of colorings in $\mathcal{C}$ together with those obtained from a coloring in $\mathcal{C}$ by flipping a single red point to green. The \textit{deletion class}, $\mathcal{C}^{\rm del}$, of $\mathcal{C}$ consists of colorings in $\mathcal{C}$ along with those obtained from a coloring in $\mathcal{C}$ by flipping a single green point to red.

For a hereditary class $\mathcal{M}$ of $GF(q)$-representable matroids where $q$ is in $\{2, 3\}$, we denote by $\mathcal{M}^{\rm extq}$ the class of all $GF(q)$-representable matroids in $\Me$. The next result follows from the unique representability of binary and ternary matroids.

\begin{lemma}
\label{commutative}
Let $\mathcal{M}$ be a hereditary class of $GF(q)$-representable matroids, where $q$ is in $\{2, 3\}$, and let $\mathcal{M}_c$ be its corresponding $2$-coloring class. Then the $2$-coloring class, $(\mathcal{M}^{\rm del})_c$, corresponding to $\mathcal{M}^{\rm del}$ equals the deletion class $(\mathcal{M}_c)^{\rm del}$ of $\mathcal{M}_c$. Also, $(\mathcal{M}^{\rm extq})_c$ equals the extension class $(\mathcal{M}_c)^{\rm extq}$ of $\mathcal{M}_c$.
\end{lemma}

\begin{lemma}
\label{deletion_extension_complementation}
Let $\mathcal{M}$ be a hereditary class of $GF(q)$-representable matroids, where $q$ is in $\{2, 3\}$, and let $\mathcal{M}_c$ be its corresponding $2$-coloring class. Then
\[
\mathrm{comp}((\mathcal{M}_c)^{\rm del}) = (\mathrm{comp}(\mathcal{M}_c))^{\rm extq}.
\]
Consequently, a coloring $C$ is forbidden for $(\mathcal{M}_c)^{\rm del}$ if and only if $\mathrm{comp}(C)$ is forbidden for $(\mathrm{comp}(\mathcal{M}_c))^{\rm extq}$.
\end{lemma}

\begin{proof}
By definition, a coloring $C$ is in $(\mathcal{M}_c)^{\rm del}$ if $C \in \mathcal{M}_c$, or $C$ is obtained from a coloring in $\mathcal{M}_c$ by flipping a single green point to red. Note that if $C \in \mathcal{M}_c$, then $\mathrm{comp}(C)$ is in  $\mathrm{comp}(\mathcal{M}_c)$. Moreover, if $C$ is obtained by flipping a green point to red from a coloring in $\mathcal{M}_c$, then $\mathrm{comp}(C)$ is obtained from a coloring in $\mathrm{comp}(\mathcal{M}_c)$ by flipping a red point to green. It follows that $\mathrm{comp}(C)$ is in the extension class of $\mathrm{comp}(\mathcal{M}_c)$. Hence every coloring in $\mathrm{comp}((\mathcal{M}_c)^{\rm del})$  lies in $(\mathrm{comp}(\mathcal{M}_c))^{\rm extq}$.

Conversely, every coloring in $(\mathrm{comp}(\mathcal{M}_c))^{\rm extq}$ arises as the complement of some coloring in $(\mathcal{M}_c)^{\rm del}$. This establishes the equality of the two classes. The statement about forbidden colorings follows immediately.
\end{proof}

We now apply the bound from Theorem \ref{bound_k} to the coloring context. 

\begin{lemma}
\label{coloring_bound_k}
Let $\mathcal{C}$ be a hereditary class of $2$-colorings of projective geometries over $GF(q)$. Suppose that every forbidden $2$-coloring for $\mathcal{C}$ has rank at most $r$, and has at most $k$ green elements. If $C=(G,R)$ is a forbidden $2$-coloring for the extension class $\mathcal{C}^{\rm extq}$, then the rank of $C$ is at most $\max\{ 2r,\; r + k(r-1) \}.$
\end{lemma}

\begin{proof}
The proof follows exactly the same structure as the proof of Theorem \ref{bound_k_general}. Let $C=(G,R)$ be a forbidden $2$-coloring for $\mathcal{C}^{\rm extq}$ with ambient space $PG(r(C)-1, q)$. Since $C$ is not in $\mathcal{C}$, it has a projective flat $P_0$ such that the restriction $C|P_0$ is a forbidden coloring for $\mathcal{C}$. Let the green points of $C|P_0$ be $\{e_1, \ldots, e_k\}$ and $r(P_0) = s$. For each $i$ in $\{1, \ldots, k\}$, the coloring obtained by flipping $e_i$ to red is not in $\mathcal{C}$, so it has a projective flat $P_i$ such that its restriction to $P_i$ is forbidden for $\mathcal{C}$. By the submodularity of the rank function of the projective geometry, the rank of the union of these projective flats is at most $s + k(r-1)$. Following the exact hyperplane contradiction argument from Theorem \ref{bound_k_general}, we conclude that the rank of $C$ is at most $\max\{ 2r, r + k(r-1) \}$.
\end{proof}

\begin{theorem}
\label{deletion_bound_k_corrected}
Let $\mathcal{M}$ be a hereditary class of $GF(q)$-representable matroids, where $q$ is in $\{2, 3\}$. Let $r$ denote the maximum rank of a forbidden flat for $\mathcal{M}$, and let $k'$ denote the maximum number of red points in a forbidden $2$-coloring for $\mathcal{M}_c$. If $M$ is a forbidden flat for the deletion class $\Md$, then $r(M) \le \max\{ 2r, r + k'(r-1) \}$.
\end{theorem}

\begin{proof}
Let $C$ be a $2$-coloring of $PG(r(M)-1,q)$ representing $M$. By Lemma \ref{forbidden_colorings_flats}, $C$ is a forbidden $2$-coloring for $(\Md)_c$. By Lemma \ref{commutative}, $(\Md)_c$ equals $(\mathcal{M}_c)^{\rm del}$, so $C$ is a forbidden $2$-coloring for $(\mathcal{M}_c)^{\rm del}$. Further, by Lemmas \ref{complement_class} and \ref{deletion_extension_complementation}, it follows that $\mathrm{comp}(C)$ is a forbidden $2$-coloring for $(\mathrm{comp}(\mathcal{M}_c))^{\rm extq}$. 

Since the rank of the colorings $C$ and $\mathrm{comp}(C)$ both equal $r(M)$, it is enough to show that the rank of a forbidden $2$-coloring for the $2$-coloring class $(\mathrm{comp}(\mathcal{M}_c))^{\rm extq}$ is at most $\max\{ 2r, r + k'(r-1) \}$. 

By Lemma \ref{complement_class}, since $\mathcal{M}_c$ is a hereditary class of $2$-colorings, its complement $\mathrm{comp}(\mathcal{M}_c)$ is also a hereditary class of $2$-colorings. Observe that $r$ is the maximum rank and $k'$ is the maximum number of green points in a forbidden $2$-coloring for $\mathrm{comp}(\mathcal{M}_c)$. By applying Lemma \ref{coloring_bound_k} to the coloring class $\mathrm{comp}(\mathcal{M}_c)$, it follows that a forbidden $2$-coloring for $(\mathrm{comp}(\mathcal{M}_c))^{\rm extq}$ has rank at most $\max\{ 2r, r + k'(r-1) \}$. 
\end{proof}

\begin{proposition}
\label{union_of_hereditary_matroids}
For $q$ in $\{2,3\}$, let $\mathcal{M}_1$ and $\mathcal{M}_2$ be hereditary classes of $GF(q)$-representable matroids. Then, the set-theoretic union $\mathcal{M}_1 \cup \mathcal{M}_2$ is hereditary. Moreover, if $c$ denotes the maximum rank of a forbidden flat for $\mathcal{M}_1$, and $d$ denotes the maximum rank of a forbidden flat for $\mathcal{M}_2$, then every forbidden flat for $\mathcal{M}_1 \cup \mathcal{M}_2$ has rank at most $c+d$. 
\end{proposition}

\begin{proof}
It is clear that $\mathcal{M}_1 \cup \mathcal{M}_2$ is hereditary. Let $M$ be a forbidden flat for $\mathcal{M}_1 \cup \mathcal{M}_2$. Since $M$ is not in $\mathcal{M}_1$, it has a flat $F_1$ such that $M|F_1$ is a forbidden flat for $\mathcal{M}_1$. Similarly, it has a flat $F_2$ such that $M|F_2$ is a forbidden flat for $\mathcal{M}_2$. Since every hyperplane of $M$ is in $\mathcal{M}_1 \cup \mathcal{M}_2$, it follows that no hyperplane of $M$ contains $F_1 \cup F_2$. Therefore the rank of $M$ equals the rank of $M|(F_1 \cup F_2)$, which is at most $r(F_1) + r(F_2) \leq c+d$. 
\end{proof}

For $q$ in $\{2,3\}$, and a hereditary class $\mathcal{M}$ of $GF(q)$-representable matroids, the \textbf{almost-$\mathcal{M}$ class} is the set-theoretic union $\Me \cup \Md$. The following is an immediate consequence of Theorems \ref{bound_k}, \ref{deletion_bound_k_corrected}, and Proposition \ref{union_of_hereditary_matroids}.

\begin{corollary}
\label{matroid_corollary}
For $q$ in $\{2,3\}$, let $\mathcal{M}$ be a hereditary class of $GF(q)$-representable matroids such that $\mathcal{M}$ has a finite set of forbidden flats. Then the almost-$\mathcal{M}$ class is hereditary, and has a finite set of forbidden flats. 
\end{corollary}

\section{Flat-hereditary graph classes}

Note that if all matroids in a hereditary class $\mathcal{M}$ of matroids are graphic, its extension class may contain non-graphic matroids as well. However by restricting the extension class to contain only graphic matroids we still retain the property of being hereditary. For a hereditary class $\mathcal{M}$ of graphic matroids, a matroid $M$ is in the \textbf{graphic extension class}, $\Meg$, of $\mathcal{M}$ if $M$ is graphic and $M$ is in the extension class $\Me$ of $\mathcal{M}$. While working with only graphic matroids, it makes sense to consider only those forbidden flats that are graphic. For a hereditary class $\mathcal{M}$ of graphic matroids, we say $F$ is a \textbf{graphic forbidden flat} for $\mathcal{M}$ if $F$ is graphic and a forbidden flat for $\mathcal{M}$. The following is the statement of Theorem \ref{bound_k} restricted to graphic matroids.

\begin{corollary}
\label{bound_k_corollary}
Let $\mathcal{M}$ be a hereditary class of graphic matroids and let $r$ denote the maximum rank of a graphic forbidden flat for $\mathcal{M}$, and $k$ denote the maximum number of elements in a graphic forbidden flat for $\mathcal{M}$. If $M$ is a graphic forbidden flat for $\Meg$, then the rank of $M$ is at most $\max\{2r, r + k(r-1)\}$. 
\end{corollary}

For a graph $G$, a set of edges is a flat of the cycle matroid $M(G)$ if and only if the subgraph induced by those edges has the property that each of its connected components is an induced subgraph of $G$. Inspired by this observation, we say a subgraph $F$ of a graph $G$ is a \textbf{flat} of $G$ if $F$ is a disjoint union of induced subgraphs of $G$. We call a class of graphs $\mathcal{G}$ \textbf{flat-hereditary} if $\mathcal{G}$ is closed under taking flats. A graph $H$ is a \textbf{forbidden flat} for $\mathcal{G}$ if $H$ is not in $\mathcal{G}$ but every proper flat of $H$ is in $\mathcal{G}$. 

Let $G_1$ and $G_2$ be graphs. If their vertex sets are disjoint, their {\bf $0$-sum}, $G_1 \oplus G_2$, is their disjoint union. Now suppose that $|V(G_1) \cap V(G_2)| = 1$. Then the union of $G_1$ and $G_2$, which has vertex set $V(G_1) \cup V(G_2)$ and edge set $E(G_1) \cup E(G_2)$, is a {\bf $1$-sum}, $G_1 \oplus_1 G_2$, of $G_1$ and $G_2$.

The following proposition notes that a hereditary class closed under $0$-sum is flat-hereditary.

\begin{proposition}
\label{prop_b}
Let $\mathcal{G}$ be a hereditary class of graphs closed under $0$-sum. Then $\mathcal{G}$ is flat-hereditary. Moreover, $H$ is a forbidden induced subgraph for $\mathcal{G}$ if and only if $H$ is a forbidden flat for $\mathcal{G}$.
\end{proposition}

\begin{proof}

For a graph $G$ in $\mathcal{G}$, let $F$ be a flat of $G$. Then $F = F_1 \oplus \ldots \oplus F_k$, where each $F_i$ is an induced subgraph of $G$. Since each $F_i$ is in $\mathcal{G}$ and the class $\mathcal{G}$ is closed under $0$-sum, the graph $F$ is in $\mathcal{G}$. Therefore $\mathcal{G}$ is flat-hereditary. 

Let $H$ be a forbidden induced subgraph for $\mathcal{G}$ and let $T$ be a proper flat of $H$. Then $T = T_1 \oplus \ldots \oplus T_k$ where each $T_i$ is a proper induced subgraph of $H$. Since each $T_i$ is in $\mathcal{G}$, the graph $T$ is in $\mathcal{G}$. It follows that $H$ is a forbidden flat for $\mathcal{G}$. Conversely, if $H$ is a forbidden flat for $\mathcal{G}$, it is straightforward to see that $H$ is a forbidden induced subgraph for $\mathcal{G}$ as well.
\end{proof}

Instead of verifying closure under $0$-sum of a hereditary class, it is enough to check if all its forbidden induced subgraphs are connected due to the following.

\begin{lemma}
\label{prelim}
Let $\mathcal{G}$ be a hereditary class of graphs. Then all forbidden induced subgraphs for $\mathcal{G}$ are connected if and only if $\mathcal{G}$ is closed under $0$-sum.
\end{lemma}

\begin{proof}
Let $G_1$ and $G_2$ be graphs in $\mathcal{G}$. Note that both $G_1$ and $G_2$ contain no forbidden induced subgraph for $\mathcal{G}$. Since every forbidden induced subgraph for $\mathcal{G}$ is connected, it follows that the disjoint union $G_1 \oplus G_2$ contains no forbidden induced subgraph for $\mathcal{G}$. It follows that $G_1 \oplus G_2$ is in $\mathcal{G}$. 

Conversely, let $H$ be a forbidden induced subgraph for $\mathcal{G}$ such that $H = H_1 \oplus H_2$. Since $H_1$ and $H_2$ are proper induced subgraphs of $H$, both are in $\mathcal{G}$. Since $\mathcal{G}$ is closed under $0$-sum, it follows that $H$ is in $\mathcal{G}$, a contradiction. Therefore $H$ is connected.
\end{proof}

The following characterizes the hereditary classes whose forbidden induced subgraphs are $2$-connected. We omit its proof as the proof follows the same outline as the proof of Lemma \ref{prelim}.

\begin{lemma}
\label{prelim_stronger}
Let $\mathcal{G}$ be a hereditary class of graphs. Then all forbidden induced subgraphs for $\mathcal{G}$ are $2$-connected if and only if $\mathcal{G}$ is closed under $0$-sum and $1$-sum.
\end{lemma}

It follows by Proposition \ref{prop_b} that the closure under $0$-sum of a hereditary class is sufficient to imply that it is flat-hereditary. However it is not necessary that a flat-hereditary class is closed under $0$-sum. The following illustrates that for a hereditary class, being flat-hereditary is weaker than closure under $0$-sum.

\begin{lemma}
\label{characterization_flat_hereditary}
Let $\mathcal{G}$ be a hereditary class of graphs. Then $\mathcal{G}$ is flat-hereditary if and only if for any pair $H_1, H_2$ of graphs in $\mathcal{G}$ that are induced subgraphs of a graph $H$ in $\mathcal{G}$ such that $V(H_1) \cap V(H_2)$ is empty, the graph $H_1 \oplus H_2$ is in $\mathcal{G}$. 
\end{lemma}

\begin{proof}
 Observe that $H_1 \oplus H_2$ is a flat of $H$. Since $H$ is in $\mathcal{G}$ and $\mathcal{G}$ is flat-hereditary, it follows that $H_1 \oplus H_2$ is in $\mathcal{G}$. Conversely, let $F$ be a flat of a graph $G$ in $\mathcal{G}$. Then $F = F_1 \oplus \ldots \oplus F_k$ where each $F_i$ is an induced subgraph of $G$. It follows that $F$ is in $\mathcal{G}$ so $\mathcal{G}$ is flat-hereditary.   
\end{proof}

It is clear that a flat-hereditary class $\mathcal{G}$ is hereditary and each of its forbidden flats is a forbidden induced subgraph as well. The following notes that every forbidden induced subgraph for $\mathcal{G}$ contains a forbidden flat for $\mathcal{G}$ of the same order.

\begin{lemma}
\label{flat_fis}
Let $\mathcal{G}$ be a flat-hereditary class of graphs. Then every forbidden induced subgraph $G$ of $\mathcal{G}$ has a flat $F$ such that $F$ is a forbidden flat for $\mathcal{G}$ and $V(G) = V(F)$. In particular, if $\alpha$ is the maximum number of vertices in a forbidden flat for $\mathcal{G}$ and $\beta$ is the maximum number of vertices in a forbidden induced subgraph for $\mathcal{G}$, then $\alpha = \beta$.
\end{lemma}

\begin{proof}
Since $G$ is not in $\mathcal{G}$, the graph $G$ contains a flat $F$ such that $F$ is a forbidden flat for $\mathcal{G}$. Note that for each vertex $v$ of $G$, the graph $G-v$ is in $\mathcal{G}$ so $G-v$ does not contain $F$ as a flat. It follows that $V(G)=V(F)$. Since every forbidden flat for $\mathcal{G}$ is a forbidden induced subgraph for $\mathcal{G}$, it follows that $\alpha = \beta$. 
\end{proof}

For a flat-hereditary class $\mathcal{G}$ of graphs, consider the class $\mathcal{M}$ of cycle matroids of graphs in $\mathcal{G}$. Note that $\mathcal{M}$ is a hereditary class of matroids. However the  graphic forbidden flats for $\mathcal{M}$ are not necessarily the cycle matroids of the forbidden flats for $\mathcal{G}$. The failure of having a correspondence between the forbidden flats for $\mathcal{G}$ and the graphic forbidden flats for $\mathcal{M}$ is due to the fact that distinct graphs may have the same cycle matroid. In particular, if two graphs $G_1$ and $G_2$ have the same cycle matroid $M$, and $G_1$ is in $\mathcal{G}$ while $G_2$ is a forbidden flat for $\mathcal{G}$, then $M$ is clearly not a forbidden flat for $\mathcal{M}$. Therefore it is not possible to apply Theorem \ref{bound_k} to the class of cographs, for instance. Let $\mathcal{G}$ be the class of cographs and let $\mathcal{M}$ be the class of cycle matroids of cographs. Since cographs are closed under $0$-sum, the class $\mathcal{G}$ is flat-hereditary and by Proposition \ref{prop_b}, the path of length three, $P_4$, is the unique forbidden flat for $\mathcal{G}$. However any circuit of size at least five is a forbidden flat for $\mathcal{M}$. Also the cycle matroid, $U_{3,3}$, of $P_4$ is in $\mathcal{M}$.

We call a class $\mathcal{G}$ of graphs \textbf{matroid-closed} if for any graph $G$ in $\mathcal{G}$, all graphs $H$ that have the same cycle matroid as $G$ are also in $\mathcal{G}$. Note that the class of cographs is not matroid-closed. In order to be able to apply Theorem \ref{bound_k} to graphs, we need flat-hereditary classes of graphs that are matroid-closed. Let $v$ and $v'$ be vertices of distinct components of a graph $G$. A \textbf{vertex identification} of $G$ modifies $G$ by identifying $v$ and $v'$ as a new vertex $\overline{v}$. The reverse operation of vertex identification is \textbf{vertex cleaving}. Suppose that a graph $G$ is obtained from disjoint graphs $G_1$ and $G_2$ by identifying the vertices $u_1$ of $G_1$ and $u_2$ of $G_2$ as the vertex $u$ of $G$, and identifying the vertices $v_1$ of $G_1$ and $v_2$ of $G_2$ as the vertex $v$ of $G$. In a \textbf{twisting} of $G$ about $\{u,v\}$, we identify, instead, $u_1$ with $v_2$ and $v_1$ with $u_2$. Whitney's $2$-isomorphism theorem (see \cite{ox2}) characterizes when two graphs have the same cycle matroids. The following is its restatement.

\begin{theorem}
\label{whitney}
Let $\mathcal{G}$ be a class of graphs. Then $\mathcal{G}$ is matroid-closed if and only if $\mathcal{G}$ is closed under the operations of vertex cleaving, vertex identification, and twisting.
\end{theorem}

\begin{lemma}
\label{error_fixed}
Let $\mathcal{G}$ be a flat-hereditary class of graphs that is closed under the operations of vertex cleaving, vertex identification, and twisting, and let $\mathcal{M}$ be the class of cycle matroids of graphs in $\mathcal{G}$. Then $\mathcal{M}$ is hereditary. Moreover, $F$ is a graphic forbidden flat for $\mathcal{M}$ if and only if $F$ is the cycle matroid of a forbidden flat for $\mathcal{G}$. 
\end{lemma}

\begin{proof}
Let $M$ be a matroid in $\mathcal{M}$ and let $F$ be a flat of $M$. Note that $M$ is the cycle matroid of a graph $G$ in $\mathcal{G}$ and $F$ is the cycle matroid of a flat $H$ of $G$. Since $\mathcal{G}$ is flat-hereditary, it follows that $H$ is in $\mathcal{G}$ so $F$ is in $\mathcal{M}$. 

Suppose that $F$ is a graphic forbidden flat for $\mathcal{M}$ and $G$ is a graph whose cycle matroid is isomorphic to $F$. We show that $G$ is a forbidden flat for $\mathcal{G}$. Clearly $G$ is not in $\mathcal{G}$. Let $H$ be a proper flat of $G$. Then the cycle matroid of $H$ is a proper flat $F_H$ of $F$. Note that $F_H$ is in $\mathcal{M}$ so $\mathcal{G}$ contains a graph $H'$ whose cycle matroid is isomorphic to $F_H$. By Theorem \ref{whitney}, the graph $H$ can be obtained from $H'$ via the operations of vertex cleaving, vertex identification, and twisting. Therefore $H$ is in $\mathcal{G}$ so $G$ is a forbidden flat for $\mathcal{G}$. Conversely, let $G$ be a forbidden flat for $\mathcal{G}$ and let $F$ be its cycle matroid. Since $\mathcal{G}$ is matroid-closed, it is clear that $F$ is not in $\mathcal{M}$. Note that any proper flat of $F$ is the cycle matroid of a proper flat of $G$ and so in $\mathcal{M}$. Therefore $F$ is forbidden flat for $\mathcal{M}$. 
\end{proof}

We omit the proof of the following straightforward result.

\begin{lemma}
\label{hereditary_graph}
The edge-apex class, $\Ga$, of a flat-hereditary class $\mathcal{G}$ of graphs is flat-hereditary. Moreover, if $\mathcal{G}$ is matroid-closed, then so is $\Ga$.
\end{lemma}

The \textbf{rank} of a graph $H$ is the number of edges in a spanning forest of $H$. Note that the rank of $H$ equals the rank of the cycle matroid of $H$. 
Theorem \ref{bound_k} when translated for matroid-closed flat-hereditary graph classes says that if such a class of graphs has a finite number of forbidden flats, then so does its edge-apex class. The following is the precise statement.

\begin{theorem}
\label{bound_k_graph}
Let $\mathcal{G}$ be a flat-hereditary class of graphs that is closed under the operations of vertex cleaving, vertex identification, and twisting, and let $r$ denote the maximum rank of a forbidden flat for  $\mathcal{G}$, and $k$ denote the maximum number of edges in a forbidden flat for $\mathcal{G}$. If $G$ is a forbidden flat for $\Ga$, then the rank of $G$ is at most $\max \{2r, r + k(r-1)\}$. 
\end{theorem}

\begin{proof}
Consider the class $\mathcal{M}$ of matroids that are cycle matroids of the graphs in $\mathcal{G}$. By Lemma \ref{error_fixed}, the class $\mathcal{M}$ is hereditary and the graphic forbidden flats for $\mathcal{M}$ are the cycle matroids of the forbidden flats for $\mathcal{G}$. Since the rank of a forbidden flat for $\mathcal{G}$ equals the rank of its corresponding cycle matroid, a graphic forbidden flat for $\mathcal{M}$, it follows by Lemma \ref{error_fixed} that $r$ is the maximum rank of a graphic forbidden flat for $\mathcal{M}$ and $k$ is the maximum number of elements in a graphic forbidden flat for $\mathcal{M}$. Note that the class of cycle matroids of $\Ga$ is the graphic extension class $\Meg$ of $\mathcal{M}$ so by Corollary \ref{bound_k_corollary} and Lemmas \ref{error_fixed} and \ref{hereditary_graph}, the result follows.
\end{proof}

Examples of hereditary classes of graphs that are flat-hereditary and matroid-closed include the following.

\begin{proposition}
Let $\mathcal{G}$ be a hereditary class of graphs such that the set of the forbidden induced subgraphs for $\mathcal{G}$ is closed under the operation of twisting and each forbidden induced subgraph for $\mathcal{G}$ is $2$-connected. Then $\mathcal{G}$ is flat-hereditary and matroid-closed. 
\end{proposition}

\begin{proof}
By Lemmas \ref{prelim} and \ref{prelim_stronger}, it follows that $\mathcal{G}$ is closed under $0$-sum and $1$-sum. By Proposition \ref{prop_b}, the class $\mathcal{G}$ is flat-hereditary. Since $\mathcal{G}$ is hereditary and closed under $0$-sum and $1$-sum, it follows that $\mathcal{G}$ is closed under the operations of vertex cleaving and vertex identification. Suppose $G$ is a graph in $\mathcal{G}$ such that a graph $G'$ obtained from $G$ via twisting is not in $\mathcal{G}$. Since $G'$ is not in $\mathcal{G}$, the graph $G'$ contains a forbidden induced subgraph $H'$ for $\mathcal{G}$. Note that $G$ contains $H'$ as an induced subgraph or a subgraph $H$ as an induced subgraph where $H$ is a twisting of $H'$. Since both $H$ and $H'$ are forbidden induced subgraphs for $\mathcal{G}$, it follows that $G$ is not in $\mathcal{G}$, a contradiction.
\end{proof}

For a class $\mathcal{G}$ as in Theorem \ref{bound_k_graph}, since the rank of a forbidden flat $G$ for $\Ga$ is bounded, it follows that the number of vertices of $G$ is bounded. Therefore by Lemma \ref{flat_fis}, the number of vertices of a forbidden induced subgraph for $\Ga$ is bounded.
Using Theorem \ref{bound_k_graph},  we obtain the same bound as in Theorem \ref{unique_FIS} for hereditary classes of graphs that are matroid-closed and closed under $0$-sum. We require the following lemma.

\begin{lemma}
\label{point_wala}
Let $\mathcal{G}$ be a hereditary class of graphs closed under $0$-sum and let $G$ be a forbidden flat for the edge-apex class $\Ga$ of $\mathcal{G}$. If $G$ is not connected, then $G = G_1 \oplus G_2$ such that both $G_1$ and $G_2$ are isomorphic to a forbidden induced subgraph for $\mathcal{G}$.
\end{lemma}

\begin{proof}
Suppose that $G$ is disconnected, and let $G = G_1 \oplus \ldots \oplus G_k$. Suppose there is a component, say $G_1$, that does not contain a forbidden induced subgraph for $\mathcal{G}$. Then $G_1$ is in $\mathcal{G}$. Consider $G' = G_2 \oplus \ldots \oplus G_k$. Observe that $G'$ is in $\Ga$ but not in $\mathcal{G}$. It follows that $G'$ has an edge $e$ such that $G'-e$ is in $\mathcal{G}$. Since $G-e = G_1 \oplus G'-e$, and both $G_1$ and $G'-e$ are in $\mathcal{G}$, it follows that $G-e$ is in $\mathcal{G}$ so $G$ is in $\Ga$, a contradiction.  
If $G$ has more than two connected components, then $G' = G_2 \oplus \ldots \oplus G_k$ is a proper induced subgraph of $G$ that has at least two connected components. Since each connected component of $G'$ contains a forbidden induced subgraph for $\mathcal{G}$, it follows that $G$ is not in $\Ga$, a contradiction. Therefore $G = G_1 \oplus G_2$. Suppose that the forbidden induced subgraph $F$ contained in, say, $G_1$ is a proper induced subgraph of $G_1$. Then there is a vertex $v$ of $G_1$ such that $F$ is an induced subgraph of $G_1 - v$. Note that the graph $G-v = (G_1 - v) \oplus G_2$ contains two edge disjoint forbidden induced subgraphs for $\mathcal{G}$ so is not in $\Ga$. This is a contradiction to the minimality of $G$. Therefore both $G_1$ and $G_2$ are isomorphic to forbidden induced subgraphs for $\mathcal{G}$.
\end{proof}

\begin{corollary}
\label{final_conclusion}
Let $\mathcal{G}$ be a hereditary class of graphs that is closed under the operations of $0$-sum, vertex identification, and twisting, and let $c$ and $k$ denote the maximum number of vertices and edges, respectively, among all forbidden induced subgraphs for $\mathcal{G}$. If $G$ is a forbidden induced subgraph for $\Ga$, then $|V(G)| \leq $  $\max \{2c, c + k(c-2)\}$.
\end{corollary}

\begin{proof}
By Proposition \ref{prop_b}, it follows that $\mathcal{G}$ is flat-hereditary and every forbidden induced subgraph for $\mathcal{G}$ is a forbidden flat for $\mathcal{G}$. To bound the number of vertices of $G$, we will first bound the vertices of an arbitrary forbidden flat $F$ for $\Ga$. Since $\mathcal{G}$ is closed under $0$-sum, by Lemma \ref{prelim}, all forbidden induced subgraphs for $\mathcal{G}$ are connected. It follows that the maximum rank of a forbidden flat for $\mathcal{G}$ is $c-1$, and $k$ is the maximum number of edges in a forbidden flat for $\mathcal{G}$. Note that $\mathcal{G}$ is closed under vertex cleaving so by Theorem \ref{bound_k_graph}, every forbidden flat $F$ for $\Ga$ has rank at most $\max \{2(c-1), (c-1) + k(c-2)\}$.

Note that if $F$ is connected, then $|V(F)| \leq 1+ \max \{2(c-1), (c-1) + k(c-2)\} = \max \{2c-1, c + k(c-2)\}$. Alternatively, if $F$ is disconnected, then by Lemma \ref{point_wala}, $|V(F)| \leq 2c$. Therefore $F$ has at most $\max \{2c, c + k(c-2)\}$ vertices. Finally, by Lemma \ref{flat_fis}, every forbidden induced subgraph $G$ for $\Ga$ has the same number of vertices as some forbidden flat for $\Ga$, so the result follows.
\end{proof}

\section*{Acknowledgement}

The authors thank James Oxley for helpful suggestions.

\end{document}